\newtheorem{thm}{Theorem}[section]
\newtheorem{prop}[thm]{Proposition}
\newtheorem{lem}[thm]{Lemma}
\newtheorem{question}[thm]{Question}
\newtheorem{conj}[thm]{Conjecture}
\newtheorem{problem}[thm]{Problem}
\def\ex{{\operatorname{ex}}}
\def\exr{{\operatorname{ex}^\star}}
\def\sat{{\operatorname{sat}}}
\def\satr{{\operatorname{sat}^\star}}
\def\satrr{{\operatorname{sat}^\star_r}}
\def\satrr{{\operatorname{sat}^\star_r}}
\def\F{{\mathcal{F}}}
\theoremstyle{definition}
\newtheorem{definition}[thm]{Definition}
\theoremstyle{remark}
\newtheorem{remark*}{Remark}
\numberwithin{equation}{section}
\title{Rainbow Saturation}
\author{Neal Bushaw}
\address{Dept. of Math. \& Appl. Math., Virginia Commonwealth University}
\email{nobushaw@vcu.edu}
\author{Daniel Johnston}
\address{Dept. of Mathematics \& Statistics, Skidmore College}
\email{djohnst1@skidmore.edu}
\author{Puck Rombach}
\address{Dept. of Mathematics \& Statistics, University of Vermont}
\email{puck.rombach@uvm.edu}
\begin{document}
\begin{abstract}
    This paper explores a new direction in the well studied area of graph saturation -- we introduce the notion of rainbow saturation.  A graph $G$ is rainbow $H$-saturated if there is some proper edge coloring of $G$ which is rainbow $H$-free (that is, it has no copy of $H$ whose edges are all colored distinctly), but where the addition of any edge makes such a rainbow H-free coloring impossible.
    
    Taking the maximum number of edges in a rainbow $H$-saturated graph recovers the rainbow Tur\'an numbers whose systematic study was begun by Keevash, Mubayi, Sudakov, and Verstra\"ete.  In this paper, we initiate the study of corresponding {\emph{rainbow saturation number}} -- the minimum number of edges among all rainbow $H$-saturated graphs.  We give examples of graphs for which the rainbow saturation number is bounded away from the traditional saturation number (including all complete graphs $K_n$ for $n\geq 4$ and several bipartite graphs). It is notable that there are non-bipartite graphs for which this is the case, as this does not happen when it comes to the rainbow Tur\'an number versus the traditional Tur\'an number. We also show that saturation numbers are at most linear for a large class of graphs, providing a partial rainbow analogue of a well known theorem of K\'aszonyi and Tuza. We conclude this paper with a number of related open questions and conjectures.
    
    \smallskip
\noindent \textbf{Keywords.} Edge coloring, extremal, graph theory, saturation.
\end{abstract}

    \subjclass[2010]{05C15, 05C35, 05C38}

\maketitle

\section*{Declarations}
\subsection*{Funding} None.
\subsection*{Conflicts of interest/Competing interests} None.
\subsection*{Availability of data and material} Not applicable.
\subsection*{Code availability} Not applicable.

\section{Introduction / History}


Among the oldest problems in graph theory is the forbidden subgraph problem -- how can we characterize the set of graphs forbidding some fixed graph $H$?  In the early part of the last century, this gave birth to the prototypical problem in  extremal graph theory -- how many edges can an $n$-vertex $H$-free graph have?

We say that a graph $G$ is $H$-saturated when it contains no copy of $H$, but the addition of any edge creates some $H$. The earliest known problem studied in this context was proposed by Mantel as an exercise in a publication by the Royal Dutch Mathematical Society, and answered independently by Gouwentak, Mantel, Teixeira de Mattes, Schuh and Wythoff~\cite{MantelSol} (only Wythoff's version is published): how many edges can an $n$-vertex triangle-free graph have?

Further work in this area did not proceed until the seminal work of P\'al Tur\'an in 1941 \cite{Turan}, providing a generalization of the result to $K_r$-free graphs.

In this paper, however, we go the other direction: what is the {\emph{smallest}} number of edges a (maximal) $K_n$-free graph could have?  This, too, is a question with a long and involved history. For a state of the art survey, see the recent surveys~\cite{CFFS,RG}.

Motivated by an important classical problem in additive number theory, Keevash, Mubayi, Sudakov, and Verstra\"ete~\cite{KMSV} introduced the {\emph{rainbow extremal number}}. This is in many ways a problem at the intersection of Tur\'an Theory and Ramsey Theory -- we consider (properly) edge colored graphs, and the forbidden graph is a copy of $H$ in which every edge is colored differently (we call such a copy of $H$ {\bf{rainbow}}). As the main purpose of this paper is to introduce a similar rainbow saturation number, we give a careful definition. Since it doesn't complicate the definition, we give here a definition for $k$-uniform hypergraphs and not just for the $2$-graphs which are the focus of this manuscript. In all of the definitions below, we will typically omit the subscript for the case $r=2$.

All graphs in this paper are finite, and without loops or multiple edges. Wherever possible, we use standard notation (see, e.g.,~\cite{MGT}).

\begin{definition}
A {\bf{$k$-edge-coloring}} of an $r$-uniform hypergraph $G$ is an assignment $\phi:E(G)\to\{1,2,\ldots,k\}$. We call such a coloring {\bf{proper}} if whenever two hyperedges $e$ and $f$ share at least one vertex, we have $\phi(e)\neq\phi(f)$.
\end{definition}

\begin{definition}
Given an $r$-uniform hypergraph $H$ and $n\in\mathbb{N}$, the {\bf{extremal function for H}} (denoted by $\exr_r(n,H)$) is the largest number of edges in any $n$-vertex $r$-uniform hypergraph $G$ with the following properties:
\begin{enumerate}[(a)]
\item There is a proper edge coloring of $G$ containing no rainbow copy of $H$,
\item For every $r$-tuple $e\not\in E(G)$, every proper edge coloring of $G+e$ contains a rainbow copy of $H$
\end{enumerate}
We call a hypergraph satisfying (a) and (b) {\bf{rainbow $H$-saturated}}.
\end{definition}

Building on this, it is then entirely natural to define the analogous rainbow saturation number.

\begin{definition}
Given a graph $H$ and $n\in\mathbb{N}$, we denote by $\satrr(n,H)$ the smallest number of edges in any $n$-vertex rainbow $H$-saturated graph.
\end{definition}

Later, we will need a similar definition where the forbidden subgraph is actually a family of graphs $\F=\{F_1,\ldots,F_t\}$. In this regime, a graph $G$ is rainbow $\F$-saturated when $G$ has a proper edge coloring avoiding a rainbow copy of {\emph{every}} graph in the family $\F$, but the addition of a new edge creates, under {\emph{any}} coloring, a rainbow copy of {\emph{some}} graph $F\in\F$. The rainbow saturation number for a family is then the smallest number of edges in such a graph.

It is worth noting that the phrase `rainbow saturation' has appeared in the literature, but in a somewhat different context (see, e.g.,~\cite{rain1,rain2}). However, as the version presented in this paper is analogous to the rainbow extremal number in every way, we insist on using this terminology. To the authors' knowledge, this version has not yet been explored in the literature.

\section{Lower bounds on $\satr(n,K_k)$}
In this Section, we give a lower bound on saturation numbers for complete graphs. For comparison, we first state the corresponding result in the non-rainbow case, due to Erd\H{o}s, Hajnal, and Moon~\cite{EHM}.

In the statement, we use $G+H$ to denote the graph join of $G$ and $H$, the graph consisting of disjoint copies of $G$ and $H$ along with every possible edge in between; i.e., $V(G+H)=V(G)\cup V(H)$ and $E(G+H)=E(G)\cup E(H)\cup\{xy:x\in V(G),y\in V(H)\}$. This will show up repeatedly in the remainder of this document.

\begin{thm}[Erd\H{o}s-Hajnal-Moon 1964]
For $2\le r\le n$, we have $$\sat(n,K_r)=(r-2)(n-r+2)+\binom{r-2}{2}.$$
Further, the unique extremal graph is $K_{r-2}+E_{n-r+2}$, where $E_{n-r+2}$ is the empty graph on $n-r+2$ vertices.
\end{thm}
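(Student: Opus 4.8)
The plan is to prove the two bounds separately; the upper bound is a routine check, and the lower bound, which also yields the uniqueness statement, carries the content. For the upper bound I would verify that $G^\star=K_{r-2}+E_{n-r+2}$ is $K_r$-saturated: any $r$ of its vertices include at least two from the independent part $E_{n-r+2}$, which are nonadjacent, so $G^\star$ contains no $K_r$; conversely, for the only type of nonedge $xy$ (with $x,y$ both in the independent part) the set $\{x,y\}\cup V(K_{r-2})$ spans a $K_r$ in $G^\star+xy$. Since $e(G^\star)=\binom{r-2}{2}+(r-2)(n-r+2)$, this gives the stated upper bound on $\sat(n,K_r)$.

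For the lower bound I would induct on $n$, letting $G$ be an arbitrary $K_r$-saturated graph on $n\ge r$ vertices. First one observes $\delta(G)\ge r-2$: if $v$ has a nonneighbor $u$, the $K_r$ created in $G+uv$ contains $v$ and so exhibits $r-2$ neighbors of $v$, while if $v$ is adjacent to everything then $\deg(v)=n-1\ge r-2$. The key dichotomy is whether some vertex has degree exactly $r-2$. If $\deg(v)=r-2$, then for every nonneighbor $u$ of $v$ the $K_r$ created in $G+uv$ contains both $u$ and $v$; since $v$ then has only $r-1$ neighbors, this $K_r$ is exactly $\{v\}\cup N(v)\cup\{u\}$, so $N(v)$ induces a $K_{r-2}$ and $u$ is adjacent to all of $N(v)$. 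Ranging over all $n-r+1$ nonneighbors of $v$ shows every vertex of $N(v)$ has degree $n-1$; hence $N(v)$ is precisely the set $C$ of vertices adjacent to everything. Any \emph{other} vertex of degree $r-2$ lies outside $C$, hence is a nonneighbor of $v$, hence is adjacent to all of $N(v)$, hence has neighborhood $N(v)$. So all degree-$(r-2)$ vertices share the common neighborhood $C$, which is a $K_{r-2}$ dominating $G$; since $G$ is $K_r$-free, no edge can lie inside $V\setminus C$, so $V\setminus C$ is independent and $G=K_{r-2}+E_{n-r+2}$ exactly, with $\binom{r-2}{2}+(r-2)(n-r+2)$ edges. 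This case uses no inductive hypothesis; in particular it handles the base case $n=r$, where $G=K_r-e$.

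The remaining case $\delta(G)\ge r-1$ is the main obstacle; here one needs the \emph{strict} inequality $e(G)>\binom{r-2}{2}+(r-2)(n-r+2)$, so that the extremal graph is excluded. The natural attempt is to delete a minimum-degree vertex $v$ and induct: if $G-v$ is again $K_r$-saturated then $e(G)=e(G-v)+\deg(v)\ge\bigl[(r-2)(n-r+1)+\binom{r-2}{2}\bigr]+(r-1)$, which beats the target. But $G-v$ need not be $K_r$-saturated: a nonedge $xy$ of $G-v$ fails to create a $K_r$ precisely when every $K_r$ of $G+xy$ passes through $v$, and one checks this forces $x,y\in N(v)$ and forces $N(v)$ to contain a copy of $K_{r-1}$ with only the edge $xy$ missing. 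Thus all ``bad'' nonedges lie inside $N(v)$, and resolving the case reduces to a careful local analysis there. One workable route is a degree count: when $\deg(v)=r-1$, $N(v)$ contains no $K_{r-1}$, and studying, for each nonneighbor $u$ of $v$, the clique of $r-2$ common neighbors of $u$ and $v$ inside $N(v)$ shows that the nonedges of $G[N(v)]$ pairwise intersect --- hence form a star, a triangle of nonedges being excluded since then $N(v)$ would contain no $K_{r-2}$ at all --- so that $N(v)$ minus the star's center is a $K_{r-2}$ whose $r-2$ vertices are each adjacent to every nonneighbor of $v$ and therefore have degree at least $n-2$. Feeding these forced high-degree vertices into $2e(G)=\sum_w\deg(w)$ and balancing them against the degree-$(r-1)$ vertices produces the needed surplus once $n$ is not too small relative to $r$, and the residual small cases are handled by a finer elementary count. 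Carrying out this analysis around the minimum-degree vertex in all cases is the crux of the proof.

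Combining the two cases with the construction gives $\sat(n,K_r)=(r-2)(n-r+2)+\binom{r-2}{2}$; and since equality forces us into the case $\delta(G)=r-2$, which pins $G$ down as $K_{r-2}+E_{n-r+2}$, the extremal graph is unique.
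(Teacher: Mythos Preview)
The paper does not prove this theorem at all: it is quoted as the classical 1964 result of Erd\H{o}s, Hajnal, and Moon, with a citation to~\cite{EHM}, and is used only as a benchmark against which the rainbow saturation bound of Theorem~\ref{thmkr} is compared. So there is no ``paper's own proof'' to compare your proposal to.

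As for the proposal itself: your upper bound and your Case~1 (some vertex of degree exactly $r-2$) are clean and correct, and together they already establish the value of $\sat(n,K_r)$ and the uniqueness of the extremizer \emph{provided} you can rule out equality in Case~2. Your Case~2, however, is explicitly left as a sketch (``one workable route\ldots'', ``the residual small cases are handled by a finer elementary count''). The structural claim that the nonedges of $G[N(v)]$ form a star is fine---two disjoint nonedges or a triangle of nonedges would cap the clique number of $N(v)$ at $r-3$, contradicting the $K_{r-2}$ you located---but the subsequent degree count is not carried out, and in the single-nonedge subcase you only get $r-3$ (not $r-2$) vertices of degree $\ge n-2$, so the bookkeeping needs more care than you indicate. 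If you want a self-contained proof along these lines you will need to actually execute that count; alternatively, the original Erd\H{o}s--Hajnal--Moon argument and Bollob\'as's later set-pair proof both dispense with this case analysis entirely.
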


\begin{thm}\label{thmkr}
For $ 4\leq r \leq n$, we have $\satr(n,K_r)\geq \sat(n,K_r)+n-2r+3$.
\end{thm}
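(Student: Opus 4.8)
The plan is to show that a rainbow $K_r$-saturated graph $G$, together with a fixed proper colouring $\phi$ having no rainbow $K_r$, is forced by the interaction of the saturation condition with $\phi$ to carry roughly a factor $\tfrac54$ more edges than the Erd\H{o}s--Hajnal--Moon extremal graph $K_{r-2}+E_{n-r+2}$. Two preliminaries come first. Since restricting a proper colouring with no rainbow $K_r$ to a subgraph again has no rainbow $K_r$, the property ``admits a proper colouring with no rainbow $K_r$'' is monotone under taking subgraphs; hence rainbow $K_r$-saturated graphs are exactly the edge-maximal such graphs, and in particular a $K_r$-free one is ordinary $K_r$-saturated, so has at least $\sat(n,K_r)$ edges. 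Also $\delta(G)\ge r-2$, since adding an edge at a vertex of smaller degree creates no copy of $K_r$ at all and hence cannot violate maximality.

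The engine of the proof is a local analysis of each non-edge $xy$. Extending $\phi$ by a brand-new colour on $xy$ must create a rainbow $K_r$, necessarily through $xy$; hence $D_{xy}:=N(x)\cap N(y)$ contains an $(r-2)$-clique $S$ with $G[S\cup\{x,y\}]=K_r-xy$ rainbow under $\phi$. More importantly, for $G+xy$ to have no rainbow-$K_r$-free proper colouring it must be impossible to destroy, by a single simultaneous proper recolouring, every copy of $K_r$ through $xy$. I would analyse the elementary kill moves: a copy $\{x,y\}\cup S$ becomes non-rainbow precisely when two of its (necessarily disjoint) edges share a colour, i.e.\ either the colour of $xy$ matches that of an edge inside $S$, or some pair $\{xa,yb\}$ with $a,b\in S$ is monochromatic. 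Because the colours $\{\phi(xa):a\in D_{xy}\}$ are pairwise distinct, and likewise on the $y$-side, the moves of the second kind can destroy only a subgraph of $G[D_{xy}]$ admitting an orientation of in- and out-degree at most $1$ (a disjoint union of paths and cycles), while the first move destroys only the $(r-2)$-cliques meeting a single matching of $G[D_{xy}]$. Hence, if for some non-edge the family of $(r-2)$-cliques of $G[D_{xy}]$ is simple enough to be covered this way -- and the recolourings involved can be made without turning some other $K_r$ of $G$ rainbow -- then $G+xy$ is recolourable, a contradiction. So every link $G[D_{xy}]$ is quantitatively complex. In the clean special case $\deg v=r-2$ this collapses to: $N(v)$ is an $(r-2)$-clique adjacent to every vertex, so $G=K_{r-2}+(G-N(v))$ with $v$ isolated in $G-N(v)$; a short colour-counting argument then shows there is at most one vertex of degree $r-2$, a second one being impossible because it would require a colour forced both to appear and not to appear on the common clique (here $r\ge4$ is used, so that the relevant copy of $K_r$ has disjoint edges).

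With these constraints in place, the endgame splits. If $G$ is $K_r$-free it is ordinary $K_r$-saturated; the minimum and near-minimum $K_r$-saturated graphs are essentially $K_{r-2}$ joined to a sparse graph, and the link-complexity statement rules out the extremal graph itself and forces the sparse part to carry enough further edges to push the total past $\tfrac54\sat(n,K_r)=\tfrac54\big((r-2)n-\binom{r-1}{2}\big)$. If instead $G$ contains a copy of $K_r$, a similar but easier analysis applies: $\phi$ must colour that copy with at most $\binom r2-1$ colours, so (as $r\ge4$) two disjoint edges of it are monochromatic, and iterating this over all copies of $K_r$ already makes $G$ locally denser than the extremal graph, which together with $\delta(G)\ge r-2$ gives the required count. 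The hypothesis $r\ge4$ is essential throughout, since for $r=3$ a properly coloured triangle is always rainbow and hence $\satr(n,K_3)=\sat(n,K_3)$.

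The main obstacle is the calibration in the middle step: converting the qualitative statement that every link is ``complex'' into the precise numerical gain behind the constant $\tfrac54$, uniformly in the degree of $v$ and regardless of whether $G$ is $K_r$-free. The delicate point is that a recolouring used to destroy one near-rainbow $K_r$ through $xy$ can create a rainbow $K_r$ among the other copies sharing a recoloured edge; one must show that all such escapes can be blocked simultaneously only when $G$ already carries enough extra edges, and it is exactly the accounting of these extra edges that produces the factor $\tfrac54$.
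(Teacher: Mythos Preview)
The gap you flag yourself is real: you never carry out the ``calibration'' from qualitative link complexity to the constant $\tfrac54$, and your sketch for the $K_r$-containing case (one monochromatic disjoint pair inside each copy of $K_r$) does not by itself yield any global lower bound on $e(G)$. So as written the proposal is a plan with an unfilled centre.

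More to the point, the paper does not go through any of this machinery. Its entire argument is the observation you relegate to the ``clean special case $\deg v=r-2$'': if two nonadjacent vertices $x,y$ both have degree exactly $r-2$, then (since adding $xy$ must create a $K_r$ through $xy$) their common neighbourhood is a clique $K_{r-2}$, and one can colour $xy$ with the colour of an edge inside that clique --- a proper extension, since that edge is non-incident to $x$ and to $y$ --- making the unique new $K_r$ non-rainbow, contradicting rainbow saturation. From ``no two nonadjacent vertices of degree $r-2$'' the paper then reads off the edge count directly by degree considerations. Your analysis of $G[D_{xy}]$ for arbitrary non-edges, the orientation argument bounding which $(r-2)$-cliques a single recolouring can destroy, the case split into $K_r$-free versus $K_r$-containing, and the recolouring-interaction accounting are all absent from the paper and, on its route, unnecessary. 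You already have the key lemma; the fix is to drop the surrounding apparatus and go straight from that lemma to a degree count.
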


\begin{proof}
First, notice that if $G$ is rainbow $K_r$-saturated then any two nonadjacent vertices $x,y$ have at least $r-2$ neighbors in common, since adding $xy$ creates a $K_r$. Suppose that there are two nonadjacent vertices $x,y$ that have exactly $r-2$ neighbors in common. This neighborhood must be a $K_{r-2}$. Copying a non-incident color from this $K_{r-2}$ (in a rainbow $K_r$-free coloring of $G$) we color $xy$ to create a rainbow $K_r$-free coloring of $G+xy$, contradicting that $G$ is rainbow $K_r$-saturated. Therefore, any two nonadjacent vertices in $G$ have at least $r-1$ neighbors in common.\\

Let $v$ be a vertex of minimum degree in $G$. Then every vertex outside $v\cup N(v)$ has at least $r-1$ edges to $N(v)$ (which contains a $K_{r-2}$). This gives 
\[ |E(G)|\geq (r-1)(n-r+1) +\binom{r-2}{2} . \]
\end{proof}

Note that~\ref{thmkr} implies that there are non-bipartite graphs for which the rainbow saturation number is not asymptotically equal to the traditional saturation number. This is not the case for the rainbow extremal number versus the traditional extremal number.

\section{Bipartite Graphs}
In our proof of Theorem~\ref{thegoods}, we will use a bound on the rainbow saturation numbers for bipartite graphs in an essential way. Unfortunately, this seems quite difficult in general; nevertheless, we can easily obtain linear bounds for the rainbow saturation numbers of trees, and of $C_4$'s.

The result for trees follows immediately from the following bound on rainbow extremal numbers for forests, due to Johnston, Palmer, and Rombach~\cite{JPR}. Since for any $n$ and $H$ we have $\satr(n,H)\le\exr(n,H)$, Proposition~\ref{treeprop} follows.

\begin{lem}\label{treepropex}[\cite{JPR}]
For every forest $F$, there is a constant $c=c(F)$ such that $\exr(n,F)\le cn$.
\end{lem}

\begin{prop}\label{treeprop} For every forest $F$, there is a constant $c=c(F)$ such that  $\satr(n,F)\le cn$.
\end{prop}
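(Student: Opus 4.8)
The plan is to exhibit, for each forest $F$, an explicit $n$-vertex graph $G$ that is rainbow $F$-saturated and has only $O(n)$ edges. The natural candidate is a "book-like" graph: take a small clique $K$ on $s$ vertices, where $s$ depends only on $|V(F)|$, and join it to an independent set $I$ of size $n-s$; that is, $G=K_s + E_{n-s}$. This graph has $\binom{s}{2}+s(n-s)=O(n)$ edges, so the edge count is immediate. The real content is to check the two saturation conditions: (a) that $G$ admits a proper edge-coloring with no rainbow copy of $F$, and (b) that adding any missing edge forces a rainbow $F$ in every proper edge-coloring.

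For condition (b), note that every missing edge lies inside $I$, so adding $e=xy$ with $x,y\in I$ creates a graph in which $x$ and $y$ both have the whole of $K_s$ in their neighborhood. Since $F$ is a forest, it is in particular sparse and has bounded chromatic number (indeed, it is bipartite), so for $s$ large enough $F$ embeds into $K_s+e$ in a way that uses $e$; one then wants to argue that, because $K_s+e$ is so dense and highly edge-connected, \emph{any} proper coloring of it contains a rainbow copy of $F$ through $e$. This should follow from a greedy/counting argument: a proper edge-coloring of $K_s$ uses many colors, and one can embed the (bounded-size) forest $F$ edge by edge, at each step choosing an available vertex whose connecting edge avoids the finitely many previously-used colors — the number of forbidden choices at each step is bounded by a function of $|V(F)|$ only, so taking $s$ large enough (again depending only on $F$) makes the greedy embedding go through. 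I would make the constant $c(F)$ come out of this choice of $s$.

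For condition (a), I need a proper coloring of $G=K_s+E_{n-s}$ itself with no rainbow $F$. Here the key observation is that since $F$ is a forest, it contains no cycle, so any copy of $F$ in $G$ must use at least $|E(F)|-\binom{s}{2}$ edges incident to $I$ — but more usefully, any copy of $F$ with more than $s$ vertices must use two edges sharing an endpoint in $I$... which a proper coloring already forbids from being equal, so that's not quite enough. Instead I would color so that the edges between $I$ and $K_s$ reuse a bounded palette in a structured way: for instance, color edge $vu$ with $v\in I$, $u\in K_s$ by a color depending only on $u$ (and on a proper coloring of $K_s$), adjusting so properness at the $I$-side is not violated — since each $v\in I$ sees $K_s$ this needs $s$ colors at $v$, which is fine. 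The point is to force any copy of $F$ that spreads across many vertices of $I$ to repeat a color, using that $F$ has more edges than a star of the same order once it has two non-leaf vertices. The hard part, and the main obstacle, is exactly this: designing the coloring on the join edges so that no rainbow $F$ appears while keeping the coloring proper, since $F$ can be an arbitrary forest (e.g., a long path or a spider) and may embed into $G$ in many different shapes. I expect to handle this by exploiting that a forest on $f$ vertices has at most $f-1$ edges, so it cannot be rainbow unless it sees $f-1$ distinct colors; bounding the number of distinct colors reachable along any forest-embedding into the carefully colored $G$ by something less than $|E(F)|$ finishes it, and this is where the bulk of the (omitted) casework lives.
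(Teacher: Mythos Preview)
Your approach has a genuine gap, and it is also quite different from what the paper actually does.

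The paper's proof is a single line: since $\satr(n,H)\le\exr(n,H)$ holds for every $H$, the bound follows immediately from the cited result of Johnston, Palmer, and Rombach that $\exr(n,F)\le c(F)\,n$ for every forest $F$. No explicit saturated graph is ever built.

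Your construction $G=K_s+E_{n-s}$ breaks down at condition~(a), and not merely in an ``omitted casework'' sense: for many forests $F$ there is \emph{no} proper edge-coloring of this $G$ that avoids a rainbow $F$, however the coloring is designed. The cleanest instance is the claw $F=K_{1,3}$. Every vertex $u\in K_s$ has degree $n-1$ in $G$, and in any proper coloring the three edges of any copy of $K_{1,3}$ centered at $u$ necessarily receive three distinct colors; hence every such claw is rainbow, and (a) fails for every $s\ge 1$. More generally, $G$ has roughly $sn$ edges, so once $s$ exceeds the constant $c(F)$ in $\exr(n,F)\le c(F)\,n$, \emph{every} proper coloring of $G$ contains a rainbow $F$ by the very definition of $\exr$, and (a) is outright impossible. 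Your argument for (b), on the other hand, needs $s$ large enough for a greedy rainbow embedding of $F$ to succeed inside $K_{s+2}$; the two requirements pull in opposite directions and cannot in general be reconciled. (Incidentally, the suggested scheme ``color $vu$ by a color depending only on $u$'' violates properness at $u$ rather than at $v$: all $n-s$ edges from $I$ into a fixed $u\in K_s$ would then share a color.)
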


For some classes of trees, we can obtain fairly good bounds on constant in Proposition~\ref{treeprop}. Next, we give a linear bound on the rainbow saturation number for paths of length three\footnote{Note that we use $P_k$ to denote the path with $k$ vertices and $(k-1)$ edges. Further, note that since every proper coloring of $P_3$ is rainbow, we know that $\satr(n,P_3)=\sat(n,P_3)$.} and cycles of length four. First, we give the corresponding saturation number for comparison.

\begin{thm}[\cite{KT}]
$\sat(n,P_4)=\begin{cases} \frac{n}{2}&\textrm{ if }n\textrm{ is even,}\\&\\ \frac{(n+3)}{2}&\textrm{ if }n\textrm{ is odd.}\end{cases}$
\end{thm}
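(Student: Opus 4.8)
The plan is to exploit the rigid structure of graphs with no $P_4$. The first step is the classical fact that a connected graph with no copy of $P_4$ is either a star $K_{1,t}$ (possibly a single vertex or a single edge) or the triangle $K_3$: if $v$ is a vertex of maximum degree in the component, then a vertex at distance $2$ from $v$ together with a neighbour of $v$ other than their common neighbour would span a $P_4$, so $v$ must be adjacent to every other vertex of its component; and then any edge among the neighbours of $v$, together with one further neighbour, spans a $P_4$ unless the component has at most three vertices, in which case it is a star or $K_3$. Consequently every $P_4$-free graph is a vertex-disjoint union of stars and triangles, and a $P_4$-saturated graph on $n$ vertices is exactly such a union that is edge-maximal among $P_4$-free graphs.

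The second step is to extract the saturation constraints for such a union $G$ on $n\ge 4$ vertices. If $G$ has a $K_{1,2}$ component, with centre $c$ and leaves $\ell_1,\ell_2$, then inserting $\ell_1\ell_2$ merely turns that component into a $K_3$ and creates no $P_4$; so $G$ has no $K_{1,2}$ component. Likewise, if $G$ has an isolated vertex $v$, then joining $v$ to the centre of a star component (or to a second isolated vertex) only enlarges a star (or forms a $K_2$) and again creates no $P_4$; so the only saturated $G$ with an isolated vertex consists of one isolated vertex together with a disjoint union of triangles, and that graph has $n-1$ edges. In every other case $G$ has minimum degree at least $1$ and no $K_{1,2}$ component.

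The third step is the lower bound. For even $n$, minimum degree at least $1$ already gives $e(G)\ge n/2$, and the degenerate ``vertex plus triangles'' graph (possible only when $n\equiv 1\pmod 3$) has $n-1\ge n/2$ edges. For odd $n$, $2e(G)=\sum_v d(v)\ge n$ together with parity forces $2e(G)\ge n+1$; and if equality held, the degree sequence would be one $2$ with the rest $1$'s, so the two neighbours of the degree-$2$ vertex, both of degree $1$, would span a forbidden $K_{1,2}$ component. Hence $e(G)\ge(n+3)/2$ for odd $n$. Finally I would match these bounds with explicit saturated graphs: a perfect matching for even $n$, and $\tfrac{n-3}{2}$ disjoint edges plus one triangle for odd $n$. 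In each case every missing edge joins two distinct components, each a $K_2$ or the triangle, and adding it manifestly creates a path on four vertices, so these graphs are indeed $P_4$-saturated.

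The one point requiring genuine care is the odd case of the lower bound: the crude minimum-degree estimate only yields $\lceil n/2\rceil$, which is off by one, so the argument must really use that a $K_{1,2}$ component is never saturated; and one must also confirm, via the structural classification, that no other union of stars and triangles beats the two extremal graphs above.
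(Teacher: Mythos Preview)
The paper does not give its own proof of this statement; it is quoted from K\'aszonyi--Tuza~\cite{KT} purely for comparison with the rainbow version in Theorem~\ref{p4s}. So there is nothing in the paper to compare your argument against.

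That said, your argument is correct and is essentially the standard one. The structure theorem (every connected $P_4$-free graph is a star or a triangle), the observation that a $K_{1,2}$ component can always be completed to a $K_3$ and that an isolated vertex can be absorbed into any star component, and the degree-sum/parity step for odd $n$ are exactly what is needed. Two minor remarks: (i) in the structural step you tacitly use $\deg(v)\ge 2$ to find a second neighbour of $v$; it is worth saying explicitly that if the maximum degree is $1$ the component is $K_1$ or $K_2$, which are stars; (ii) your closing sentence (``one must also confirm\ldots that no other union of stars and triangles beats the two extremal graphs above'') undersells your own proof: the degree-sum argument with the $K_{1,2}$ exclusion already gives the lower bound uniformly over all saturated graphs, so no further case analysis is required.
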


\begin{thm}[\cite{FFL, Ollman, TuzaC4,Tuza}]
$\sat(n,C_4)=\left\lfloor\frac{3n-5}{2}\right\rfloor$.
\end{thm}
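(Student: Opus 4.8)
The plan is to prove the upper and lower bounds separately. I record first a reformulation used throughout: a $C_4$ through an edge $uv$ is exactly a path $u,a,b,v$ of length three, so a $C_4$-free graph $G$ is $C_4$-saturated if and only if for every non-edge $uv$ there is an edge $ab$ with $a\in N(u)$ and $b\in N(v)$ (then automatically $a\ne b$, $a\ne v$, $b\ne u$). Two easy consequences: a $C_4$-saturated graph on at least three vertices is connected (two vertices in different components have no path of length three between them), and it has no isolated vertex.

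For the upper bound I would exhibit explicit saturated graphs. Build from a central triangle $T=x_1x_2x_3$ by attaching pendant edges and pendant triangles, where a pendant triangle at $x_i$ consists of two new vertices $a,b$ with edges $x_ia$, $x_ib$, $ab$, and where at most one pendant edge is used at any single $x_i$. For odd $n=2k+1$ take the bull ($T$ with a pendant edge at $x_1$ and a pendant edge at $x_2$) together with $k-2$ pendant triangles at $x_3$; this has $3k-1=\lfloor(3n-5)/2\rfloor$ edges. For even $n=2k$ take the net ($T$ with a pendant edge at each of $x_1,x_2,x_3$) together with $k-3$ pendant triangles at $x_1$; this has $3k-3=\lfloor(3n-5)/2\rfloor$ edges. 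In each case $C_4$-freeness holds because no two vertices have two common neighbours, and saturation is checked non-edge by non-edge: every required path of length three can be routed through $T$ — e.g.\ a pendant vertex $p$ at $x_i$ is joined to any neighbour $q$ of $x_j$ by the path $p,x_i,x_j,q$, and two leaves $\ell,m$ of distinct pendant triangles at $x_i$ are joined via the second vertex of $\ell$'s triangle. The extreme cases $n=5,6$ are the bull and the net themselves.

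For the lower bound, let $G$ be $C_4$-saturated with $n$ vertices and $m$ edges. It suffices to prove $3n-2m\le 6$, i.e.\ $\sum_v(3-\deg v)\le 6$: this gives $2m\ge 3n-6$, and for $n$ even $3n-6=2\lfloor(3n-5)/2\rfloor$, while for $n$ odd the left side $2m$ is even and $3n-6$ is odd, so the bound improves to $2m\ge 3n-5=2\lfloor(3n-5)/2\rfloor$. To bound $\sum_v(3-\deg v)$ I would analyse the vertices of degree $1$ and $2$. If $\deg u=1$ with $N(u)=\{w\}$, applying the reformulation to each non-edge $uv$ shows that every vertex outside $\{u,w\}$ has a neighbour in $N(w)\setminus\{u\}$; since $C_4$-freeness forces the sets $\{N(z)\setminus\{w\}:z\in N(w)\}$ to be pairwise disjoint, counting yields $\sum_{z\in N(w)\setminus\{u\}}(\deg z-1)\ge n-3$ — so even if $w$ is itself small, its neighbourhood carries a large surplus of degree. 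One also checks that no vertex has two degree-$1$ neighbours, and, similarly, that two degree-$2$ vertices never share the same pair of neighbours and that the two neighbourhoods of a degree-$2$ vertex jointly satisfy an analogous domination inequality. A discharging argument then closes the gap: assign each vertex the charge $3-\deg v$ and push the negative charge of the high-degree vertices outward (to distance at most $2$) so as to cancel the $+2$'s at the degree-$1$ vertices and the $+1$'s at the degree-$2$ vertices, using the domination inequalities to guarantee a high-degree vertex near every small one, and using $C_4$-freeness to bound how many small vertices can be charged to the same high-degree vertex; one arranges that the surviving positive charge totals at most $6$.

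The main obstacle is exactly this discharging step and the extraction of the sharp constant $6$ (as opposed to merely $3n-2m=o(n)$): the troublesome configurations are clusters of many degree-$\le 2$ vertices around one high-degree hub, and these — together with a bounded number of small or irregular configurations and the small values of $n$ — must be dispatched by careful case analysis. That is where essentially all of the difficulty sits (it is the content of Ollmann's original argument and Tuza's later simplification); the construction and the reductions above are comparatively routine.
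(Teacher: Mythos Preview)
The paper does not prove this theorem at all: it is stated with the citation \cite{FFL, Ollman, Tuza} and is included only as background, to contrast the known value of $\sat(n,C_4)$ with the bounds for $\satr(n,C_4)$ established in Theorem~\ref{c4s}. There is therefore no proof in the paper against which your proposal can be compared.

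As for the proposal itself, it is a reasonable outline of the classical approach rather than a proof. Your upper-bound constructions are fine, and your reformulation of saturation via paths of length three, together with the consequences for degree-one vertices, is standard and correct. But the lower bound is only a plan: you set up the inequality $\sum_v(3-\deg v)\le 6$, assert that a discharging argument will yield it, and then explicitly defer the ``troublesome configurations'' and ``careful case analysis'' to the literature. That is precisely the substance of the theorem, so as written the proposal does not constitute a proof; it is an accurate description of where the work lies.
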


\begin{thm}\label{p4s}
For each $n\geq 16$ we have 
\[ \satr(n,P_4)\leq\frac{4}{5}n -\frac{17}{10}c  ,\]
where $0\leq c \leq 4$ and $c\equiv-n~\pmod{5}$
\end{thm}

\begin{proof}
First, we provide an upper bound by constructing a rainbow $P_4$-saturated graph. For $n\geq 16$, let $G$ be a graph on $n$ vertices composed of a disjoint union of copies of $K_{1,4}$ and copies of $K_4$, with as many copies of $K_{1,4}$ as possible. This is achieved by letting $c$ be such that $0\leq c \leq 4$ and $c\equiv-n~\pmod{5}$, and taking $c$ copies of $K_4$, and $n-4c$ copies of $K_{1,4}$. Edge-color the copies of $K_4$ using 3 colors. This can only be done in a way that prevents a rainbow copy of $P_4$. The copies of $K_{1,4}$ may have any edge-coloring. Now, $G$ does not contain a rainbow copy of $P_4$. Suppose that we add an edge $e$ to $G$. Note that if one of the endpoints of $e$ has degree at least 2 in $G$, we find a rainbow copy of $P_4$. Otherwise, we create one of the two subgraphs shown in Figure~\ref{figP4subs}. In both cases, it is easy to see that we must have a rainbow copy of $P_4$. This gives
\[ \satr(n,P_4)\leq 
\frac{4(n-4c)}{5} +\frac{6c}{4}
=\frac{4}{5}n -\frac{17}{10}c .\]
 
\begin{figure}[ht]
\begin{center}
\begin{tikzpicture}[scale=1]]
\draw[fill=black!100,black!100] (-1,0) circle (.15);
\draw[fill=black!100,black!100] (1,.5) circle (.15);
\draw[fill=black!100,black!100] (1,-.5) circle (.15);
\draw[fill=black!100,black!100] (1,1.5) circle (.15);
\draw[fill=black!100,black!100] (1,-1.5) circle (.15);
\draw[black!100,line width=1.5pt] (-1,0) -- (1,.5);
\draw[black!100,line width=1.5pt] (-1,0) -- (1,-.5);
\draw[black!100,line width=1.5pt] (-1,0) -- (1,1.5);
\draw[black!100,line width=1.5pt] (-1,0) -- (1,-1.5);
\draw[black!100,line width=1.5pt,dashed] (1,.5) -- (1,1.5);
\draw (1.3,1) node{$e$};
\begin{scope}[xshift=4cm]
\draw[fill=black!100,black!100] (-1,0) circle (.15);
\draw[fill=black!100,black!100] (1,.5) circle (.15);
\draw[fill=black!100,black!100] (1,-.5) circle (.15);
\draw[fill=black!100,black!100] (1,1.5) circle (.15);
\draw[fill=black!100,black!100] (1,-1.5) circle (.15);
\draw[black!100,line width=1.5pt] (-1,0) -- (1,.5);
\draw[black!100,line width=1.5pt] (-1,0) -- (1,-.5);
\draw[black!100,line width=1.5pt] (-1,0) -- (1,1.5);
\draw[black!100,line width=1.5pt] (-1,0) -- (1,-1.5);
\draw[fill=black!100,black!100] (2,1.5) circle (.15);
\draw[black!100,line width=1.5pt,dashed] (2,1.5) -- (1,1.5);
\draw (1.5,1.3) node{$e$};
\end{scope}
\end{tikzpicture}
\caption{Two possible subgraphs in $G+e$. Both must have a rainbow copy of $P_4$ under any edge-coloring.}\label{figP4subs}
\end{center}
\end{figure}
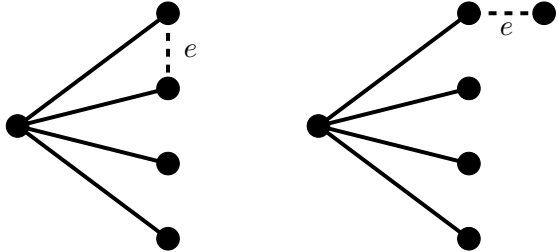

Now, for the lower bound, suppose that $G$ has at least two components on at most 3 edges. If these components have a cycle, they do not decrease the number of edges with respect to $n$, For any acyclic component on 2 or 3 edges, we can add an edge $e$ without creating a rainbow-copy of $P_4$. This is shown in Figure~\ref{figP4subs1}. 

\begin{figure}[h]
\begin{center}
\begin{tikzpicture}[scale=1]]
\draw[fill=black!100,black!100] (0,0) circle (.15);
\draw[fill=black!100,black!100] (1,.5) circle (.15);
\draw[fill=black!100,black!100] (1,-.5) circle (.15);
\draw[black!100,line width=1.5pt] (0,0) -- (1,.5);
\draw[black!100,line width=1.5pt] (0,0) -- (1,-.5);
\draw[black!100,line width=1.5pt,dashed] (1,.5) -- (1,-.5);
\draw (1.3,0) node{$e$};
\begin{scope}[xshift=4cm]
\draw[fill=black!100,black!100] (-1,0) circle (.15);
\draw[fill=black!100,black!100] (1,1) circle (.15);
\draw[fill=black!100,black!100] (1,-1) circle (.15);
\draw[fill=black!100,black!100] (1,0) circle (.15);
\draw[black!100,line width=1.5pt] (-1,0) -- (1,1);
\draw[black!100,line width=1.5pt] (-1,0) -- (1,-1);
\draw[black!100,line width=1.5pt] (-1,0) -- (1,0);
\draw[black!100,line width=1.5pt,dashed] (1,0) -- (1,1);
\draw (1.3,.5) node{$e$};
\end{scope}
\begin{scope}[xshift=7cm]
\draw[fill=black!100,black!100] (0,-.5) circle (.15);
\draw[fill=black!100,black!100] (1,.5) circle (.15);
\draw[fill=black!100,black!100] (0,.5) circle (.15);
\draw[fill=black!100,black!100] (1,-.5) circle (.15);
\draw[black!100,line width=1.5pt] (0,-.5) -- (0,.5);
\draw[black!100,line width=1.5pt] (0,-.5) -- (1,-.5);
\draw[black!100,line width=1.5pt] (0,1-.5) -- (1,.5);
\draw[black!100,line width=1.5pt,dashed] (1,-.5) -- (1,.5);
\draw (1.3,0) node{$e$};
\end{scope}
\end{tikzpicture}
\caption{The only three possible acyclic components on 2 or 3 edges in $G$, each with a new edge $e$ added without forcing a rainbow $P_4$.}\label{figP4subs1}
\end{center}
\end{figure}
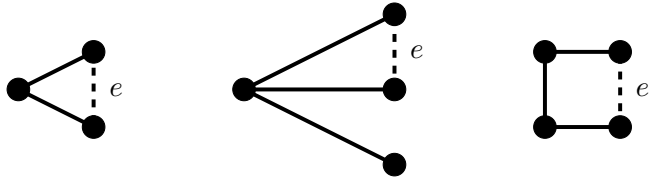
Therefore, the only components of $G$ on at most $3$ edges are either isolated vertices or copies of $K_2$. Adding an edge across two such components creates a component isomorphic to $P_2$, $P_3$ or $P_4$, each of which can be colored without a (rainbow) copy of $P_4$. 

We conclude that $G$ must have components on at least 4 edges. The lowest edge density is then achieved by having acyclic components, as demonstrated in our earlier construction. This gives
\[ \satr(n,P_4)\geq 
\left\lfloor \frac{4n}{5}  \right\rfloor.\]
\end{proof}

For cycles of length four, we can show that the traditional and rainbow saturation numbers are again different.  Though it leaves a wide range of possible values for $\satr(n,C_4)$, even showing this separation is non-trivial.

\begin{thm}\label{c4s}
For $n \geq 4$ we have $$n  \le\satr(n,C_4)\le 2n-2.$$
\end{thm}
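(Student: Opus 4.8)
The plan is to prove the two bounds by quite different means: the lower bound is a short structural observation, while the upper bound needs an explicit construction together with a small colouring case analysis.

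For the lower bound $\satr(n,C_4)\ge n+O(1)$, I would first show that any rainbow $C_4$-saturated graph $G$ must be connected: if $u$ and $v$ lay in distinct components, then $uv$ would be a bridge of $G+uv$, hence would lie on no cycle, so $G$ and $G+uv$ would contain exactly the same copies of $C_4$; extending a rainbow-$C_4$-free proper colouring of $G$ to $G+uv$ by giving $uv$ a fresh colour would then contradict part (b) of rainbow $C_4$-saturation. The same device rules out $G$ being a tree, since adding a non-edge to a tree produces a single chordless cycle, so the only new copies of $C_4$ are genuine $4$-cycles, and any single $C_4$ can be properly $2$-coloured, again contradicting (b). Hence $G$ is connected and contains a cycle, so $|E(G)|\ge n$.

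For the upper bound $\satr(n,C_4)\le 2n+O(1)$, when $n=3k+1$ I would take $G$ to consist of $k$ copies of $K_4$ all glued along one common vertex $v_0$; writing the $i$th copy as $\{v_0\}\cup T_i$ with $|T_i|=3$, this has $|E(G)|=6k=2n-2$. For property (a) I would colour each $K_4$ so that two of its three perfect matchings are monochromatic, taking the colours of the edges at $v_0$ to be globally distinct so the colourings agree there; since a $K_4$ with two monochromatic perfect matchings has no rainbow $C_4$, and since every $C_4$ of $G$ lies inside a single glued $K_4$ (a $C_4$ cannot use vertices of two different triangles, as some such vertex would have at most one neighbour among its four vertices), this colouring of $G$ has no rainbow $C_4$. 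For $n\not\equiv1\pmod3$ one replaces a single $K_4$ by a slightly larger rainbow-$C_4$-free gadget all of whose vertices have degree at least $3$, such as a five-vertex wheel sharing its hub with $v_0$; the $O(1)$ slack absorbs this.

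The crux is property (b). The non-edges of $G$ are precisely the pairs $xy$ with $x\in T_i$, $y\in T_j$, $i\ne j$; writing $T_i=\{x,x',x''\}$ and $T_j=\{y,y',y''\}$, I would first check that adding $xy$ creates exactly four new $4$-cycles, namely $x$-$v_0$-$w$-$y$-$x$ for $w\in\{y',y''\}$ and $x$-$w$-$v_0$-$y$-$x$ for $w\in\{x',x''\}$, since the triangles meet only through $v_0$. Fixing any proper colouring $\phi$ of $G+xy$ and setting $\gamma=\phi(xy)$, each of these cycles fails to be rainbow only if one of its two opposite pairs of edges is monochromatic; since $\phi(yy')\ne\phi(yy'')$, making the two cycles of the first type non-rainbow forces $\gamma\in\{\phi(v_0y'),\phi(v_0y'')\}$, and symmetrically (using $\phi(xx')\ne\phi(xx'')$) making the two of the second type non-rainbow forces $\gamma\in\{\phi(v_0x'),\phi(v_0x'')\}$. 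As these are four distinct colours of four distinct edges incident to $v_0$, no value of $\gamma$ works, so $G+xy$ has a rainbow $C_4$ under every proper colouring and $G$ is rainbow $C_4$-saturated. I expect the main technical annoyance to be choosing a gadget for $n\equiv0,2\pmod3$ that is rainbow-$C_4$-free yet sufficiently well connected that its internal non-edges still force a rainbow $C_4$ — note that several natural small candidates, such as $K_5$ or two $K_4$'s sharing an edge, admit no rainbow-$C_4$-free proper colouring at all — and the genuine obstacle to be the factor-two gap between the bounds, whose resolution would require either a sparser construction or a lower-bound idea going well beyond connectivity.
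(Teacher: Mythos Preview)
Your lower bound is correct and close in spirit to the paper's: you show that a rainbow $C_4$-saturated graph must be connected and cannot be a tree, hence has at least $n$ edges. The paper instead argues that such a graph can have at most one vertex of degree~$1$ (by adding an edge between two leaves and matching its colour to the unique opposite edge of the resulting $4$-cycle), reaching the same bound. One small point: your phrase ``any single $C_4$ can be properly $2$-coloured'' tacitly assumes that this $2$-colouring of the unique cycle extends to a proper colouring of all of $G+e$; this is true because the remaining edges form a forest, but it deserves a word.

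Your upper-bound construction for $n\equiv 1\pmod 3$ --- copies of $K_4$ glued at a common vertex $v_0$ --- is correct and genuinely different from the paper's, which uses the wheel $W_n$ with the colouring $c(wv_i)=c(v_{i+1}v_{i+2})$ and then analyses two local configurations in $W_n+e$. Your observation that the four new $4$-cycles through an added edge $xy$ force $\phi(xy)$ simultaneously into two disjoint pairs of spoke colours at $v_0$ is clean and arguably simpler than the paper's case analysis.

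There is, however, a real gap for $n\not\equiv 1\pmod 3$: your proposed gadget, the five-vertex wheel $W_5$ with hub $v_0$, admits \emph{no} rainbow-$C_4$-free proper edge-colouring at all. If the rim $abcd$ is not rainbow, say $\phi(ab)=\phi(cd)=\alpha$, then forcing the cycles $v_0abc$, $v_0cda$, $v_0dab$ to be non-rainbow successively yields $\phi(v_0a)=\phi(bc)$, then $\phi(v_0c)=\phi(da)$, then $\phi(da)=\phi(v_0b)$, whence $\phi(v_0b)=\phi(v_0c)$, contradicting properness at $v_0$. Thus $W_5$ joins $K_5$ and the two edge-sharing $K_4$'s on your own list of failed candidates, and as written your argument establishes the upper bound only for $n\equiv 1\pmod 3$. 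The paper's wheel construction handles all $n\ge 6$ uniformly and would close this gap.
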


\begin{proof}
We start with the upper bound, by providing a construction for a rainbow $C_4$-saturated graph. Let $n \geq 6$, and consider the wheel on $n$ vertices, denoted $W_n$. This is the graph formed by adding a universal vertex to a cycle graph $C_{n-1}$. We label the vertices around the cycle as $v_1,\dots,v_{n-1}$, and the universal vertex as $w$. We add the following edge coloring $c: E(W_n) \to \{ 1,\dots,n-1\}$. Let $c(wv_i)=c(v_{i+1}v_{i+2})=i$ for $1 \leq i \leq n-1$ with vertices labeled cyclically. We have illustrated this graph and edge-coloring in Figure~\ref{figwheel}. Note that every copy of $C_4$ in this graph is of the form $(v_i,v_{i+1},v_{i+2},w)$, such that edges $wv_i$ and $v_{i+1}v_{i+2}$ have the same color.

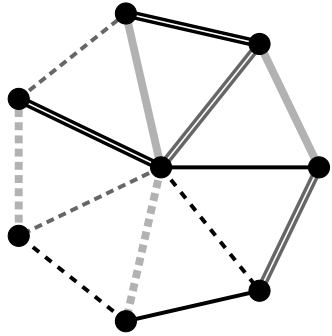
\begin{figure}[h]
\begin{center}
\begin{tikzpicture}[yscale=-0.7,xscale=0.7]
\draw[black!100,line width=1.5pt] (0,0) -- (0:3);
\draw[black!100,line width=1.5pt] (360/7:3) -- (360/7*2:3);
\draw[black!100,line width=1.5pt,dashed] (0,0) -- (360/7:3);
\draw[black!100,line width=1.5pt,dashed] (360/7*2:3) -- (360/7*3:3);
\draw[black!30,line width=3pt,dotted] (0,0) -- (360/7*2:3);
\draw[black!30,line width=3pt,dotted] (360/7*3:3) -- (360/7*4:3);
\draw[black!30,line width=3pt] (0,0) -- (360/7*5:3);
\draw[black!30,line width=3pt] (360/7*6:3) -- (0:3);
\draw[black!60,line width=1.3pt,double] (0,0) -- (360/7*6:3);
\draw[black!60,line width=1.3pt,double] (0:3) -- (360/7:3);
\draw[black!60,line width=1.5pt,densely dashed] (0,0) -- (360/7*3:3);
\draw[black!60,line width=1.5pt,densely dashed] (360/7*4:3) -- (360/7*5:3);
\draw[black!100,line width=1.3pt, double] (0,0) -- (360/7*4:3);
\draw[black!100,line width=1.3pt, double] (360/7*5:3) -- (360/7*6:3);
\draw[fill=black!100,black!100] (0,0) circle (.2);
\draw[fill=black!100,black!100] (360/7:3) circle (.2);
\draw[fill=black!100,black!100] (360/7*2:3) circle (.2);
\draw[fill=black!100,black!100] (360/7*3:3) circle (.2);
\draw[fill=black!100,black!100] (360/7*4:3) circle (.2);
\draw[fill=black!100,black!100] (360/7*5:3) circle (.2);
\draw[fill=black!100,black!100] (360/7*6:3) circle (.2);
\draw[fill=black!100,black!100] (0:3) circle (.2);
\end{tikzpicture}
\caption{A wheel graph $W_8$ with an edge-coloring that avoids a rainbow copy of $C_4$.}\label{figwheel}
\end{center}
\end{figure}

Now, suppose that we add a new edge $e$ to the graph $W_n$. This gives rise to one of two new subgraphs, depending on whether or not $e$ is of the form $v_{i}v_{i+2}$. Both subgraphs are shown in Figure~\ref{figwheelsubs}. Note that we relabeled the vertices for ease of notation.

\begin{figure}[h]
\begin{center}
\begin{tikzpicture}[scale=1]]
\draw[fill=black!100,black!100] (0,0) circle (.15);
\draw[fill=black!100,black!100] (30:1.5) circle (.15);
\draw[fill=black!100,black!100] (60:1.5) circle (.15);
\draw[fill=black!100,black!100] (90:1.5) circle (.15);
\draw[fill=black!100,black!100] (120:1.5) circle (.15);
\draw[fill=black!100,black!100] (150:1.5) circle (.15);
\draw[black!100,line width=1.5pt] (0,0) -- (30:1.5);
\draw[black!100,line width=1.5pt] (0,0) -- (60:1.5);
\draw[black!100,line width=1.5pt] (0,0) -- (90:1.5);
\draw[black!100,line width=1.5pt] (0,0) -- (120:1.5);
\draw[black!100,line width=1.5pt] (0,0) -- (150:1.5);
\draw[black!60,thick,dashed] (0,0) circle (1.5cm);
\draw[black!100,line width=1.5pt] (30:1.5) to[out=120,in=-30] (60:1.5);
\draw[black!100,line width=1.5pt] (60:1.5) to[out=150,in=0] (90:1.5);
\draw[black!100,line width=1.5pt] (90:1.5) to[out=180,in=30] (120:1.5);
\draw[black!100,line width=1.5pt] (120:1.5) to[out=210,in=60] (150:1.5);
\draw[black!100,line width=1.5pt] (120:1.5) to[out=180,in=-90]  (-2,2);
\draw[black!100,line width=1.5pt] (60:1.5) to[out=0,in=-90]  (2,2);
\draw[black!100,line width=1.5pt] (-2,2) to[out=90,in=90]  (2,2);
\draw (0,-.3) node{$w$};
\draw (-1.7,0.5) node{$a$};
\draw (1.7,0.5) node{$e$};
\draw (-1,1.5) node{$b$};
\draw (1,1.5) node{$d$};
\draw (0,1.9) node{$c$};
\draw (0,3.5) node{$G_A$};
\begin{scope}[xshift=6cm]
\draw[fill=black!100,black!100] (0,0) circle (.15);
\draw[fill=black!100,black!100] (-30:1.5) circle (.15);
\draw[fill=black!100,black!100] (0:1.5) circle (.15);
\draw[fill=black!100,black!100] (30:1.5) circle (.15);
\draw[fill=black!100,black!100] (150:1.5) circle (.15);
\draw[fill=black!100,black!100] (180:1.5) circle (.15);
\draw[fill=black!100,black!100] (210:1.5) circle (.15);
\draw[black!100,line width=1.5pt] (0,0) -- (-30:1.5);
\draw[black!100,line width=1.5pt] (0,0) -- (0:1.5);
\draw[black!100,line width=1.5pt] (0,0) -- (30:1.5);
\draw[black!100,line width=1.5pt] (0,0) -- (150:1.5);
\draw[black!100,line width=1.5pt] (0,0) -- (180:1.5);
\draw[black!100,line width=1.5pt] (0,0) -- (210:1.5);
\draw[black!60,thick,dashed] (0,0) circle (1.5cm);
\draw[black!100,line width=1.5pt] (-1.5,0) to[out=180,in=-90]  (-2.5,1);
\draw[black!100,line width=1.5pt] (1.5,0) to[out=0,in=-90]  (2.5,1);
\draw[black!100,line width=1.5pt] (-2.5,1) to[out=90,in=90]  (2.5,1);
\draw[black!100,line width=1.5pt] (-30:1.5) to[out=60,in=-90] (0:1.5);
\draw[black!100,line width=1.5pt] (0:1.5) to[out=90,in=-60] (30:1.5);
\draw[black!100,line width=1.5pt] (150:1.5) to[out=245,in=90] (180:1.5);
\draw[black!100,line width=1.5pt] (180:1.5) to[out=-90,in=120] (210:1.5);
\draw (0,-.3) node{$w$};
\draw (-1.3,-1.1) node{$a$};
\draw (-1.7,-.3) node{$b$};
\draw (1.7,-.3) node{$e$};
\draw (-1.3,1.1) node{$c$};
\draw (1.3,1.1) node{$d$};
\draw (1.3,-1.1) node{$f$};
\draw (0,2.9) node{$G_B$};
\end{scope}
\end{tikzpicture}
\caption{Two possible subgraphs in $W_n+e$. Neither of these graphs have a rainbow $C_4$-free edge-coloring.}\label{figwheelsubs}
\end{center}
\end{figure}
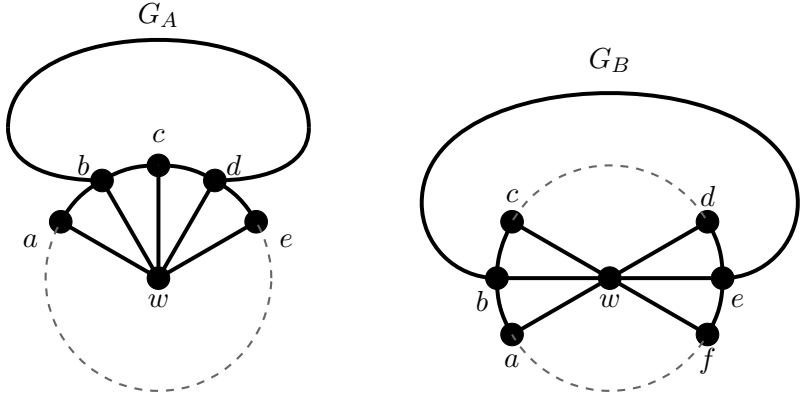
We will now show that neither of the subgraphs in Figure~\ref{figwheelsubs} has an edge-coloring with no rainbow copy of $C_4$.

Consider the subgraph $G_A$. The edges $bw$, $wd$ and $bd$ form a triangle and must have distinct colors. Label these colors $1$, $2$ and $3$, respectively. Consider the following 4 copies of $C_4$ (with slightly relaxed notation): $bcwd$, $bwcd$, $bawd$ and $bwed$. Suppose that none of them are rainbow colored. Then:
\begin{itemize}
    \item $c(bc)=2$ or $c(cw)=3$, and
    \item $c(cd)=1$ or $c(cw)=3$, and
    \item $c(ba)=2$ or $c(aw)=3$, and
    \item $c(ed)=1$ or $c(we)=3$.
\end{itemize}
It is possible to fulfill all these 4 conditions at once. This can be done in only one way, which is by setting $c(cw)=3$, $c(ba)=2$ and $c(ed)=1$. However, we now have that $c(bc)\neq 2$ and $c(cd)\neq 1$. This implies that the cycle $bwdc$ is rainbow colored. 

Consider the subgraph $G_B$. The edges $bw$, $we$ and $be$ form a triangle and must have distinct colors. Label these colors $1$, $2$ and $3$, respectively. Consider the following 4 copies of $C_4$: $bcwe$, $bwde$, $bawe$ and $bwfe$. Suppose that none of them are rainbow colored. Then:
\begin{itemize}
    \item $c(bc)=2$ or $c(cw)=3$, and
    \item $c(de)=1$ or $c(dw)=3$, and
    \item $c(ba)=2$ or $c(aw)=3$, and
    \item $c(fe)=1$ or $c(wf)=3$.
\end{itemize}
In this case, these 4 conditions cannot be satisfied simultaneously, and we must therefore have a rainbow copy of $C_4$ within this set. This gives the upper bound
\[ \satr(n,C_4)\le 2n-2 .\]

For the lower bound, first note that any rainbow $C_4$-saturated graph must be connected, as adding a bridge to $G$ can never force a new rainbow copy of $C_4$. Suppose that $G$ is rainbow $C_4$-saturated and has two non-adjacent vertices $v,w$ of degree 1, with neighbors $v'$ and $w'$, respectively. Consider adding the edge $vw$ to $G$. Since $G$ has an edge-coloring that is rainbow $C_4$-free, the only place where we could find a rainbow $C_4$ is on the cycle $(v,v',w',w)$. If $v'=w'$ this is not a $C_4$. Otherwise, let $c(vw)=c(v'w')$. Now, $G+vw$ does not have a rainbow copy of $C_4$, and therefore $G$ is not rainbow $C_4$-saturated. We conclude that, for $n\geq 4$, we must have that $G$ has at most one vertex of degree 1. Since $G$ is connected every other vertex has degree at least two; if it has a single vertex of degree one, then it also has a vertex of degree at least three, and via the handshake lemma $||G||\ge\left\lceil \frac{2(n-2)+3+1}{2}\right\rceil=n$. If $G$ has no degree one vertex, then $\|G\|\ge\frac{2n}{2}=n$. We conclude that
\[ \satr(n,C_4)\ge n . \]
\end{proof}

\section{General Results}
Among the best known results in extremal graph theory is a result of Erd\H{o}s and Stone~\cite{ES}: for any graph $H$ we have $$\ex(n,H)=\left(1-\frac{1}{\chi(G)-1}+o(1)\right)\binom{n}{2}.$$  As a consequence, when the forbidden graph is at least 3-chromatic, the extremal graphs are thus dense. However, this phenomenon fails to appear for saturated graphs in a very strong way -- all saturation numbers are actually linear.

\begin{thm}[K\'aszonyi-Tuza~\cite{KT}] For every graph $H$ there is a constant depending only on $H$ such that $\sat(n,H)\le cn$ for n sufficiently large.\end{thm}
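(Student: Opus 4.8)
The plan is to read the required linear lower bound directly off the defining saturation condition, via a minimum-degree argument. Write $d=\delta(H)$ for the minimum degree of $H$. The key local observation is the following: if $G$ is $H$-saturated and $u\in V(G)$ is not adjacent to every other vertex, then $u$ has a non-neighbour $v$, and by saturation $G+uv$ contains a copy of $H$. Since $G$ is $H$-free, this copy must use the new edge $uv$, so $u$ plays the role of some vertex $x$ of $H$ incident with that edge; hence $\deg_G(u)+1=\deg_{G+uv}(u)\ge \deg_H(x)\ge d$, giving $\deg_G(u)\ge d-1$. Thus in any $H$-saturated graph every vertex is either adjacent to all others or has degree at least $d-1$.

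When $d\ge 2$ this already finishes the argument: every vertex has degree at least $d-1\ge 1$, so summing degrees gives $2\|G\|\ge (d-1)n$ and hence $\sat(n,H)\ge \tfrac{d-1}{2}\,n$, which is the claimed bound with $c=(d-1)/2$. The remaining, and genuinely delicate, regime is $d=1$, where the local observation is vacuous. Here I would first dispose of the connected case: if $H$ is connected with at least two edges, then an $H$-saturated graph $G$ can have at most one isolated vertex, since adding an edge between two isolated vertices would create a new $K_2$-component, and a connected $H$ with at least two edges cannot embed into $K_2$, contradicting saturation. Having at most one isolated vertex forces $\|G\|\ge (n-1)/2$, again linear.

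The main obstacle is packaging these observations into a single positive constant $c=c(H)$ valid for every $H$, that is, controlling graphs with $\delta(H)=1$ that fall outside the two clean cases above. The right move is to replace the crude global minimum degree by the finer K\'asonyi--Tuza parameter: one measures how small a vertex's neighbourhood can be while still being completable to a copy of $H$ across a suitable non-edge, and then shows that the vertices failing to meet this threshold must induce a restricted subgraph (one missing a fixed piece of $H$), which either bounds their number or forces additional edges elsewhere. Quantifying precisely how the forced copy of $H$ constrains the low-degree part of $G$, and tracking the resulting constant, is the crux of the proof and the step I expect to require the most care.
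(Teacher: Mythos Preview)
There is a fundamental problem here: the inequality in the stated theorem is a typo. The K\'asonyi--Tuza theorem asserts the \emph{upper} bound $\sat(n,H)\le cn$, not a lower bound; this is clear from the surrounding context (``all saturation numbers are actually linear'') and from the rainbow analogue proved immediately afterwards, which is also an upper bound. You have attempted to prove the statement as literally written, with $\ge$, but that statement is false in general. Take $H=2K_2$, the disjoint union of two edges. A triangle together with $n-3$ isolated vertices is $2K_2$-saturated for every $n\ge 4$: all three triangle edges pairwise intersect so the graph is $2K_2$-free, and adding any new edge produces an edge disjoint from some edge of the triangle. Hence $\sat(n,2K_2)\le 3$ for all $n\ge 4$, and no positive constant $c$ can satisfy $\sat(n,2K_2)\ge cn$ for large $n$. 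Your outline already flags the disconnected $\delta(H)=1$ case as ``the main obstacle''; this example shows the obstacle is not merely delicate but insurmountable.

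The paper does not supply its own proof of the cited result, so there is nothing to compare against directly. For completeness: the actual K\'asonyi--Tuza argument (for the correct inequality $\le$) is a recursive construction in which one repeatedly removes a maximum independent set from $H$ to obtain a descending chain of families, builds a linear-size saturated graph for the terminal bipartite family, and then joins back bounded-size independent sets to climb the chain. That template is exactly what the paper adapts in the proof of its rainbow analogue. Your minimum-degree observations are correct for the special cases they cover, but they are aimed at the wrong inequality.
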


One's intuition may lead them to believe that this is not true for rainbow saturation -- surely with the freedom we get from colorings we can avoid at least some rainbow graphs using a quadratic number of edges!  Nevertheless, we present here an identical statement for the rainbow saturation numbers of a large class of graphs.

\begin{thm}\label{thegoods} Suppose that $H$ is even-cycle-free\footnote{That is, $H$ contains no induced even cycle}. Then, there is a constant $c$, depending only on $H$, such that $\satr(n,H)\le cn$.\end{thm}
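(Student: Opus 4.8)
The plan is to construct, for each large $n$, an $n$-vertex rainbow $H$-saturated graph with $O(n)$ edges. The natural candidate is a sparse base graph together with a small fixed gadget that makes every non-edge ``dangerous.'' Concretely, I would fix a small rainbow $H$-saturated graph $G_0$ on a bounded number of vertices (its existence will need to be argued, or one can instead take $G_0$ large enough to contain the relevant structure — see below) and then build $G$ on vertex set $V(G_0)\cup \{u_1,\dots,u_m\}$ where the $u_i$ are new vertices, each joined to $G_0$ in a bounded-degree way designed so that (i) no new rainbow $H$ appears and (ii) adding any edge among the $u_i$, or between a $u_i$ and $V(G_0)$, creates a rainbow $H$. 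Since each $u_i$ contributes a bounded number of edges, the total is $|E(G_0)| + O(n) = O(n)$.

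The key point where the hypothesis ``$H$ is even-cycle-free'' (no induced even cycle) enters is in controlling which rainbow copies of $H$ can appear: if $H$ has only odd induced cycles, then one can exploit parity/coloring freedom — for instance a proper coloring of a sparse graph in which every short cycle is odd can often be ``repaired'' locally after adding an edge only along a tightly constrained set of cycles, which is exactly what the $C_4$ and $K_r$ analyses in the previous sections did on small gadgets. I would first prove a local lemma: if $F$ is even-cycle-free and $e$ is an edge whose addition to a colored graph would create a rainbow copy of $F$ through $e$, then (because any cycle through $e$ inside that copy is odd, hence the copy is very rigid) the coloring can be adjusted on $e$ alone to kill it — this is the mechanism that forces saturation to be ``local,'' and it is the analogue of ``copy a non-incident color'' used in Theorem~\ref{thmkr}. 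Establishing this rigidity statement cleanly is the technical heart of the argument.

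Given that local lemma, the steps are: (1) show there exists a finite rainbow $H$-saturated graph $G_0$, or failing that, show that for some finite colored graph $B$ and some finite set $S\subseteq V(B)$, adding any edge with both endpoints ``$S$-like'' creates a rainbow $H$ — essentially by taking $G_0$ to be a piece of an extremal configuration for $\exr(\cdot,H)$ restricted so it stays bounded; (2) attach each new vertex $u_i$ to a copy of the critical neighborhood inside $G_0$ (bounded degree), choosing the attachment colors to avoid creating a rainbow $H$ using the even-cycle-free rigidity; (3) verify that every non-edge of $G$ — within $\{u_i\}$, from $u_i$ to $G_0$, or inside $G_0$ — lies in a copy of $H$ that is forced rainbow by the same rigidity argument; (4) count edges: $|E(G)| \le |E(G_0)| + \Delta\cdot m = O(1) + O(n)$, giving $\satr(n,H)\le cn$.

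The main obstacle I anticipate is step (1)/(2): producing a bounded-size gadget that is simultaneously (a) rainbow $H$-free under some proper coloring and (b) ``locally complete'' in the sense that every new edge near it forces a rainbow $H$, while keeping the attachment of the $O(n)$ pendant vertices from accidentally creating a rainbow $H$ of its own. If a single finite gadget does not suffice, the fallback is to allow the gadget to grow slowly (size $o(n)$, e.g. $O(\sqrt n)$ or $O(\log n)$) — any sublinear gadget still yields a linear bound — and to use the $\exr(\cdot,H)$ bound together with a supersaturation-type argument to guarantee the needed rainbow copies appear; since the paper only asks for an $O(n)$ upper bound, this flexibility should be enough to push the argument through. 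I would also keep the forests case (Proposition~\ref{treeprop}) and the $C_4$ case (Theorem~\ref{c4s}) in mind as sanity checks, since trees and $C_4$ are even-cycle-free only in the vacuous/degenerate sense and the general construction should specialize to something consistent with those bounds.
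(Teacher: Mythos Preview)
Your proposal has a genuine gap, and the approach diverges from the paper's in a way that does not obviously close.

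First, your ``local lemma'' is stated backwards. You write that if adding $e$ would create a rainbow copy of $F$ through $e$, then the coloring can be adjusted on $e$ alone to kill it. But that is precisely the statement that $G$ is \emph{not} rainbow $H$-saturated: saturation requires that \emph{every} proper coloring of $G+e$ contain a rainbow $H$, so a lemma letting you recolor $e$ to avoid the rainbow copy destroys condition (b), it does not help you verify it. The analogy with Theorem~\ref{thmkr} is also inverted: there the ``copy a non-incident color'' trick is used to show a candidate graph \emph{fails} to be rainbow saturated (a lower-bound argument), not to build one.

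Second, and more structurally, you have misidentified where the even-cycle-free hypothesis enters. It has nothing to do with parity rigidity of odd cycles. The paper's argument is a rainbow adaptation of the K\'asonyi--Tuza recursion: set $\F^{(0)}=\{H\}$ and repeatedly form $\F^{(i+1)}$ by deleting a maximum independent set from each graph in $\F^{(i)}$, stopping at the first $\F^{(k)}$ that contains a bipartite graph $B$. The hypothesis is used exactly once, at this base step: a bipartite graph with no induced even cycle has no cycle at all, so $B$ is a forest, and Proposition~\ref{treeprop} gives a rainbow $B$-saturated graph $G^{(k)}$ on $n_k$ vertices with $O(n)$ edges. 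One then climbs back up: given a rainbow $\F^{(i)}$-saturated $G^{(i)}$, join to it an independent set $I$ of \emph{bounded} size (roughly $h_{i-1}^3+h_{i-1}$), large enough that any forced rainbow $F\in\F^{(i)}$ in $G^{(i)}+e$ can be greedily extended by vertices of $I$ to a rainbow member of $\F^{(i-1)}$; then patch up non-edges inside $I$. Each step adds $O(n)$ edges because $|I|=O(1)$ and there are $k=O(1)$ steps.

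Note the shape of the construction is the opposite of yours: the large linear-size piece is the \emph{base} graph (the forest-saturated part), and the added gadget is a \emph{bounded} independent set joined to everything --- not a bounded gadget with $n$ pendant vertices of bounded degree. Finally, your sanity check is off: $C_4$ is itself an induced even cycle, so it is not covered by Theorem~\ref{thegoods} at all; Theorem~\ref{c4s} is a separate ad hoc result.
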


\begin{proof}
Let $H$ be a graph that is even-cycle-free. We let $\F^{(0)}=\{ H\}$. Now, we define a family of graph sets $\F^{(i)}|_{i=0}^k$ as follows. If the set $\F^{(j)}$ contains a bipartite graph, then we set $k=j$. Otherwise, we let $\F^{(j+1)}$ be the set of all graphs which can be obtained from a graph in $\F^{(j)}$ by removing a maximal independent set. Letting $\mathcal{I}(F)$ denote the set of maximal independent sets in $F$, this family is 
$$\F^{(i+1)}=\left\{F-X:F\in \F^{(i)}, \;\mbox{ and } X\in\mathcal{I}(F)\right\}.$$

We describe a recursive construction that results in a rainbow $H$-saturated graph on $n$ vertices, along the lines of~\cite{KT}, but with a few adaptations that will ensure that adding an edge forces a rainbow copy of $H$. We will start by finding a graph $G^{(k)}$ that is rainbow $\F^{(k)}$-saturated, then a $G^{(k-1)} \supset G^{(k)} $ that is rainbow $\F^{(k-1)}$-saturated, etc, until we have
$$G^{(0)}\supset G^{(1)}\supset \dots \supset G^{(k-1)} \supset G^{(k)},$$
where $G^{(0)}$ is a rainbow $H$-saturated graph on $n$ vertices.

We note that not each graph in $\F^{(i)}$ has the same order. For reasons that will become apparent later, we let $h=|H|$ and write $n_i$ in a slightly strange (but later useful) form as $$n_i=n-i (h^3+h)=n+O(1),\;\; 1\leq i\leq k.$$

By construction of the family $\F^{(i)}|_{i=0}^k$, we have that $\F^{(k)}$ contains a bipartite graph. Since $H$ is even-cycle-free, such a bipartite graph $B \in \F^{(k)}$ must be a forest. We construct a rainbow-$\F^{(k)}$-saturated graph greedily as follows. Start with an empty graph on $n_k$ vertices. Clearly this graph is $\F^{(k)}$-free. While it is not rainbow-$\F^{(k)}$-saturated, we sequentially add edges that do not force any rainbow-copies of graphs in $\F^{(k)}$ until we arrive at a rainbow-$\F^{(k)}$-saturated graph\footnote{We note that adding edges in a different order may result in a different graph; any such graph will suffice.}. We call this graph $G^{(k)}$. Since $\F^{(k)}$ contains a forest $B$, Proposition~\ref{treepropex} guarantees that $G^{(k)}$ has at most $c'n_k$ edges, where $c'$ is a constant that depends only on $B$.

Now, assume that we have a $G^{(i)}$, for $1\leq i \leq k$, which is a rainbow $\F^{(i)}$-saturated graph on $n_i$ vertices. In the next step, we will add $h^3+h$ vertices to this graph, in addition to a set of edges that are all incident to at least one of the new vertices, to create a $G^{(i-1)}$. We will show that this $G^{(i-1)}$ is a rainbow $\F^{(i-1)}$-saturated graph on $n_{i-1}$ vertices and on $\| G^{(i)} \|+O(n)$ edges. Since we start with $O(n)$ edges, and add $O(n)$ edges in each of the $k$ steps, our final graph $G^{(0)}$ will have $O(n)$ edges. It remains to be shown that we can build such a $G^{(i-1)}$, given a $G^{(i)}$.

First we let $I$ be an independent set consisting of $h^3+h$ vertices, which we join to $G^{(i)}$. We call the resulting graph $G^{(i)*}$. We note that, since $G^{(i)}$ is rainbow $\F^{(i)}$-saturated, $G^{(i)*}$ must be rainbow $\F^{(i-1)}$-free. Further, since each of these added vertices in $I$ have the same neighborhood, by making $I$ large enough, we can ensure that whenever an edge is added to $G^{(i)}$, we find a rainbow graph from $\F^{(i-1)}$ in the newly constructed graph $G^{(i)*}$. This rainbow copy is formed by $F+X$, where $F \in \F^{(i)}$ and $X \subset I$. 

How large does $I$ need to be, in order to ensure that such an $X$ exists? Suppose that we have added an edge $e$ to $G^{(i)}$, given $G^{(i)*}$ an arbitrary edge-coloring, and found a rainbow copy $F$ of a graph in $\F^{(i)}$ inside $G^{(i)}+e$. We now need to find a set $X \subseteq I$ with $X$ contained in a maximal independent set such that $X$ can be added to $F$ to obtain a rainbow copy of a graph $F+X \in \F^{(i-1)}$. In the worst case, we need all edges between $X$ and $V(F)$. We can find a suitable $X$ by starting with $X=\emptyset$ and adding vertices to $X$ one by one. A vertex can be added to $X$ if all of its edges to $V(F)$ have colors other than colors already used in our rainbow copy $F$ and colors of edges from current vertices in $X$ to $V(F)$. Therefore, at any time, there are no more than $h^3$ vertices in $I \setminus X$ that cannot be added to $X$. Therefore, if we let $|I|=h^3+h$, such a set $X \subset I$ is guaranteed to exist. For our purposes, we mainly need that $|I|=O(1)$, but we need to specify a cardinality for $I$ in order to define an explicit construction of $G^{(i)}|_{i=0}^k$.

This graph $G^{(i)*}$ still may not be rainbow $\F^{(i-1)}$-saturated; thus far, we have only shown that adding edges to $G^{(i)} \subset G^{(i)*} $ will force a rainbow $\F^{(i-1)}$-graph; by construction, this rainbow $\F^{(i-1)}$-graph indeed forces a rainbow copy of $H$. What about adding an edge between two vertices in $I$? This may not force a rainbow $\F^{(i-1)}$-graph. If this is the case for any pair of vertices in $I$, we simply add this edge to $G^{(i)*}$. Repeating this for any problematic pairs in $I$, we obtain a rainbow $\F^{(i-1)}$-saturated graph, and we let this graph be $G^{(i-1)}$. Since $|I|=h^3+h\leq h^4$ and $\| G^{(k)}  \| \leq cn_k$, we have  
\[\| G^{(0)} \|  \leq c'n_k + kh^4(n_k+n_{k-1}+\dots + n_1) + k^2h^8 \leq cn, \]
for some constant $c$ that depends only on the graph $H$.
\end{proof}

\section{Further Questions}
One may take virtually any question in the long and involved history of saturation and extremal numbers, add the word `rainbow' to it, and obtain an interesting (and likely as-of-yet-unsolved) question. In this section, we collect a few problems that the authors find particularly interesting.

Naturally, we are very interested in extending Theorem~\ref{thegoods} to all graphs. We state this as a conjecture here.

\begin{conj}
For every graph $H$ there is a constant $c=c(H)$ such that $\satr(n,H)\leq cn$.
\end{conj}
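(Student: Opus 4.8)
The plan is to reduce the conjecture, via the recursive join construction already used to prove Theorem~\ref{thegoods}, to the single assertion that \emph{for every finite family $\F$ of bipartite graphs there is a constant $c=c(\F)$ with $\satr(n,\F)\le cn$}. The reduction producing $\F^{(0)},\F^{(1)},\ldots$ by deletion of maximum independent sets terminates for \emph{every} $H$ at a family $\F^{(k)}$ containing a bipartite graph; even-cycle-freeness entered the proof of Theorem~\ref{thegoods} only in the observation that this bipartite graph, being an induced subgraph of $H$, must then be a forest. For a general $H$ the bipartite graph in $\F^{(k)}$ can contain a cycle --- necessarily an induced even cycle --- and the whole remaining difficulty is concentrated there. (For full rigor the base case should be run for families, producing a graph that is rainbow $\F^{(k)}$-saturated rather than saturated for a single member; the all-forest sub-case then still follows from $\satr\le\exr$ together with a linear bound on $\exr$ for finite families of forests, obtainable by the method of~\cite{JPR}.)

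For the remaining case one cannot go through $\exr$, since the rainbow Tur\'an number of an even cycle is superlinear~\cite{KMSV}; a direct construction is needed, and the natural template is the wheel used for $C_4$ in Theorem~\ref{c4s}. I would build a \emph{connected} $n$-vertex host graph $G$ made of a bounded-size ``amplifier'' gadget $D=D(\F)$ together with a sparse padding part --- a long cycle or path --- each of whose vertices is joined to $D$ by the same constant-size pattern, so that $\|G\|=O(n)$. The colouring would be structured so that the colour classes not internal to $D$ come in disjoint pairs, each colour used on exactly two edges, chosen so that every copy in $G$ of every $F\in\F$ is forced to contain both edges of some pair and is hence non-rainbow; this makes $G$ rainbow $\F$-free. (Alternatively one could start from a sparse \emph{ordinary} $H$-saturated graph given by K\'asonyi--Tuza~\cite{KT}, which is automatically rainbow $H$-free since it is $H$-free, and only graft on the amplifier; this needs essentially the same gadget.)

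The saturation direction is where the real work lies: one must show that adding any non-edge $e$ to $G$ destroys \emph{every} rainbow-$\F$-free proper colouring. A single copy of a member of $\F$ can never be rainbow under every proper colouring unless that member is a star or a triangle, so this has to be forced by exhibiting, inside a bounded-size subgraph of $G+e$ containing $e$, a large family of copies of members of $\F$ --- the analogue of the four $C_4$'s appearing in $G_A$ and $G_B$ in the proof of Theorem~\ref{c4s} --- together with a pigeonhole/Ramsey argument showing that the bounded number of colour-coincidence patterns available to a proper colouring cannot make all of those copies non-rainbow at once. \textbf{This is the main obstacle.} Already for $\F=\{C_6\}$ it demands a gadget richer than the wheel, and in general the crux of the conjecture is the construction of a single ``colour-robust'' gadget $D(\F)$ with the two competing properties: it must carry a global proper colouring with no rainbow member of $\F$, yet force one near every non-edge no matter how $G+e$ is properly coloured. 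Absent a fully general such gadget, a natural warm-up is the case of complete bipartite graphs $K_{s,t}$, whose symmetry should make both the paired colouring and the counting argument more tractable, and should already indicate whether the approach can be pushed to all bipartite families and hence, via the reduction above, past the even-cycle-free class of Theorem~\ref{thegoods}.
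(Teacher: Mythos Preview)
This statement is a \emph{conjecture} in the paper, not a theorem: the authors explicitly present it as open, immediately after saying they are ``very interested in extending Theorem~\ref{thegoods} to all graphs.'' There is therefore no proof in the paper to compare your attempt against.

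Your write-up is not a proof either, and you know it: you correctly isolate the reduction of Theorem~\ref{thegoods} (successive removal of maximum independent sets until a bipartite member appears) and correctly note that even-cycle-freeness is used only at the terminal step to force that bipartite member to be a forest. What remains --- a linear upper bound on $\satr$ for a finite family containing a bipartite graph with an even cycle --- is exactly the open content of the conjecture, and you flag it yourself as ``the main obstacle.'' The wheel-style gadget idea is a reasonable heuristic for $C_4$ but, as you say, already for $C_6$ you have no construction, and the Ramsey/pigeonhole step forcing a rainbow copy near every added edge is asserted rather than carried out. So the gap is genuine and is precisely the gap the paper leaves open; your proposal is a research plan, not a proof.

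One small technical remark on the reduction itself: you are right that the base case should be stated for the whole family $\F^{(k)}$ rather than for a single bipartite member $B$. The paper's own proof of Theorem~\ref{thegoods} takes $G^{(k)}$ to be rainbow $B$-saturated for one forest $B\in\F^{(k)}$, and the upward step literally needs $G^{(k)}$ to be rainbow $\F^{(k)}$-saturated; so your parenthetical about running the base case for families is a correct tightening, not an optional nicety.
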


We would also like to find a matching lower bound for Theorem~\ref{c4s}, and extend this to other cycle lengths. We suspect that even and odd cycles will behave quite differently.

\begin{problem}
Determine $\satr(n,C_k)$ for each $k\ge 3$.
\end{problem}

Recall that $\satr(n,H)$ and $\exr(n,H)$ numbers are the minimum and maximum number of edges in a rainbow $H$-saturated graph. Which other numbers of edges are possible to achieve? This is the rainbow version of determining the {\emph{saturation spectrum}} of $H$; similar problems were first explored in~\cite{barefoot} and extended in a long series of papers by assorted authors; see~\cite{RG} for details.

\begin{question}
For $m\in[\satr(n,H),\exr(n,H)]$, is there an $n$-vertex rainbow $H$-saturated graph with exactly $m$ edges?
\end{question}

In the extremal number milieu, one can easily deduce that $\exr(n,H)\ge\ex(n,H)$ since every $H$-free graph is trivially rainbow $H$-free. However, a graph which is $H$-saturated may not be rainbow $H$-saturated (since there could be a proper edge coloring which avoids rainbow copies of $H$, even if the underlying graph has some copies of $H$).

\begin{question}
Is there an example of some graph $H$ for which $\satr(n,H)<\sat(n,H)$?
\end{question}

Alon and Shikhelman defined the generalized Tur\'an number $ex(n,F,H)$ to be the largest number of copies of $F$ in an $H$-free graph. This was extended recently to the rainbow setting by Gerbner, M\'esz\'aros, Methuku, and Palmer~\cite{GMMP}. We define here the corresponding generalized rainbow saturation number.

\begin{definition}
For graphs $F$ and $H$, we define $\satr(n,F,H)$ to be the minimum number of copies of $F$ in any rainbow $H$-saturated graph.
\end{definition}

The results in this paper correspond to determining $\satr(n,K_2,H)$. It would be very interesting to replace $K_2$ with any other graph. Of course, if $F\supseteq H$, then this is trivially $0$.

\begin{problem}
Determine $\satr(n,F,H)$ for any non-trivial graphs $F$ and $H$.
\end{problem}

Finally, we end with a conjecture about rainbow saturation in hypergraphs. Pikhurko~\cite{Pikhurko} proved an extension of the K\'aszonyi and Tuza result mentioned previously, showing that $\sat_k (n,H)=O(n^{k-1})$ for any $k$-uniform hypergraph $H$. We conjecture that this holds for rainbow saturation, as well. The method of proof from Theorem~\ref{thegoods} works here with minor modification; however, we have no hypergraph equivalent of Theorem~\ref{treeprop} to end the independent set removal process.

\begin{conj}
For every $k$-uniform hypergraph $H$, we have $\satr_k(n,H)=O(n^{k-1})$.
\end{conj}

\end{document}